\documentclass[preprint, 10pt, english]{elsarticle}
\usepackage{amsthm}
\usepackage{amsmath}
\usepackage{latexsym, amssymb}
\usepackage{txfonts}
\usepackage{mathtools}
\usepackage{color}
\usepackage{babel}
\usepackage[all]{xy}
\usepackage[capitalise]{cleveref}

\newtheorem{thm}{Theorem}[section] 

\newtheorem{cor}[thm]{Corollary}

\newtheorem{const}[thm]{Construction}

\newtheorem{lem}[thm]{Lemma}
\newtheorem{prop}[thm]{Proposition}

\theoremstyle{definition}
\newtheorem{rem}[thm]{Remark}
\newtheorem{exmpl}[thm]{Example}

\newcommand\operA[2]{{\if!#2!\operatorname{#1}\else{\operatorname{#1}_{#2}^{\phantom{I}}}\fi}} 

\newcommand\charac[1]{\mathrm{char}\left(#1\right)}

\newcommand{\gr}{\mathsf{gr}}

\newcommand{\Trace}[1][]{\if!#1!\operatorname{Tr}\else{\operatorname{Tr}_{#1}^{\phantom{I}}}\fi} 

\long\def\forget#1\forgotten{{}} %

\def\({\left(}
\def\){\right)}

\newcommand\LAY[3][]{{\begin{array}{c}\mbox{#2} \if#1!{}\else{+}\fi \\ \mbox{#3}\end{array}}}

\makeatletter

\def\ps@pprintTitle{%
 \let\@oddhead\@empty
 \let\@evenhead\@empty
 \def\@oddfoot{}%
 \let\@evenfoot\@oddfoot}

\newcommand{\bigperp}{%
  \mathop{\mathpalette\bigp@rp\relax}%
  \displaylimits
}

\newcommand{\bigp@rp}[2]{%
  \vcenter{
    \m@th\hbox{\scalebox{\ifx#1\displaystyle2.1\else1.5\fi}{$#1\perp$}}
  }%
}
\makeatother

\renewcommand{\geq}{\geqslant}
\renewcommand{\leq}{\leqslant}

\newif\iffurther
\furtherfalse

\journal{??}

\begin{document}
\begin{frontmatter}

\title{The $m$-Invariant of Fields of Characteristic 2}

\author{Adam Chapman}
\ead{adam1chapman@yahoo.com}
\address{School of Computer Science, Academic College of Tel-Aviv-Yaffo, Rabenu Yeruham St., P.O.B 8401 Yaffo, 6818211, Israel}
\author{Kelly McKinnie}
\ead{kelly.mckinnie@mso.umt.edu}
\address{Department of Mathematics, University of Montana, Missoula, MT 59812, USA}

\begin{abstract}
We study the minimal dimension of a nonsingular anisotropic universal quadratic form over a field $F$ of $\operatorname{char}(F)=2$, which we call the ``$m$-invariant" of $F$. We show that every even number is the $m$-invariant of some field of characteristic 2, and consider cases when $u(F)$ and $m(F)$ are equal and when they are not.
\end{abstract}

\begin{keyword}
Quadratic Forms; Universal Quadratic Forms; Fields of Positive Characteristic
\MSC[2020] 11E81 (primary); 11E04 (secondary)
\end{keyword}
\end{frontmatter}

\section{Introduction}

The $m$-invariant of fields of characteristic not 2 was introduced in \cite{GesquiereVanGeel:1992} and received serious attention only much later in the papers \cite{Cassady:2026}, \cite{Cassady:2026a} and \cite{Cassady:2026b}. In particular, in \cite{Cassady:2026a} Cassady shows that for each integer $n \ne 3,5$, there exists a field of characteristic not 2 with $m$-invariant equal to $n$. Here we want to carry out a similar study of this invariant for fields of characteristic 2. 

Recall that the definition of a related invariant, the $u$-invariant $u(F)$, in characteristic 2 is the maximal dimension of an anisotropic \textbf{nonsingular} quadratic form, see for example \cite{MammoneTignolWadsworth:1991}. The nonsingularity requirement is redundant in characteristic not 2, but in characteristic 2 it is not.
Similarly, we define the $m$-invariant of $F$, $m(F)$, in characteristic 2 to be the minimal dimension of an anisotropic nonsingular {\it universal} quadratic form over $F$. Following \cite{GesquiereVanGeel:1992}, we set $m(F)=\infty$ if there are no universal anisotropic quadratic forms. Note that when $u(F)=0$ (the case where no anisotropic nonsingular forms exist), we have $m(F)=\infty>u(F)$, but when $u(F)\geq 2$, we have $m(F)\leq u(F)$ (see Lemma \ref{NuLemma}).

{\it Main Results.} In Section 3 we show that $m(F)$ is bounded from above by $2^{\nu(F)}$, which provides a family of fields $F$ with $m(F)<u(F)$; In Section 4 we show that every even number is the $m$-invariant of some  field of characteristic 2. Since a nonsingular quadratic form over a field of characteristic 2 has even dimension, this covers all possible values of $m(F)$. In Section 5 we give an infinite family of fields of characteristic 2 for which $m(F)=u(F)=2^n$. 
\section{Background}

A quadratic form $q : V \rightarrow F$ over a field $F$ admits an underlying symmetric bilinear form $B_q : V \times V \rightarrow F$ given by $B(v,w)=q(v+w)-q(v)-q(w)$. This correspondence is 1-to-1 when $\operatorname{char}(F)\neq 2$, but when $\operatorname{char}(F)=2$, it is not. We say that $q$ is nonsingular (or singular, respectively) if the matrix $M_q$ satisfying $vM_q w^t=B_q(v,w)$ is nonsingular (singular, resp.).

From now on, assume $\operatorname{char}(F)=2$. When $q$ is nonsingular, it decomposes as $q=b_1[1,a_1]\perp \dots \perp b_n[1,a_n]$ where $\perp$ is the direct sum and $[1,a]$ stands for the quadratic form $x^2+xy+ay^2$.

A nondegenerate quadratic form is either a nonsingular form (in the even dimensional case) or $q \perp \langle a\rangle$ (in the odd dimensional case). The Witt index $i_0(q)$ of a quadratic form is the maximal dimension of a totally isotropic subspace of $q$. When $q$ is nonsingular, it decomposes (uniquely) as $q_{an}\perp i_0(q)\times \varmathbb{H}$ where $\varmathbb{H}=[1,0]$ is the hyperbolic plane (i.e., the unique nonsingular isotropic 2-dimensional form). This is slightly more complicated when $q$ is odd-dimensional and nondegenerate, for example $[1,a]$ may be anisotropic, and $q=[1,a] \perp \langle a \rangle \simeq \varmathbb{H}\perp \langle a \rangle$. In particular, Witt cancellation holds true only for nonsingular forms and not nondegenerate forms. Nevertheless, there are a few notable results, such as the results from \cite[Section 74]{EKM} that hold true for nondegenerate forms, that we use later in this paper. The first Witt index $i_1(q)$ is defined to be $i_0(q_L)$ where $L=F(q)$. It is clearly a positive integer, but can sometimes be greater than 1, e.g., when $q$ is a Pfister form.

Witt equivalence is given by $\varphi \sim_{Witt} \varphi \perp n\times \varmathbb{H}$ for any $n$, and the Witt group $W_q F$ or $I_q F$ is defined to be the group of Witt classes of nonsingular quadratic forms with $\perp$ as the binary operation. The Arf invariant of $q=b_1[1,a_1]\perp \dots \perp b_n[1,a_n]$ is given by the class of $a_1+\dots+a_n$ in $F/\wp(F)$ where $\wp(F)=\{\lambda^2+\lambda : \lambda\in F\}$. The subgroup $I_q^2 F$ is the kernel of the Arf invariant. The Clifford invariant maps $\varphi \in I_q^2 F$ to its Clifford algebra $C\ell(\varphi) \in Br(F)$. The kernel of this map is $I_q^3 F$. The formula of the Clifford invariant is given by
$$b_1[1,a_1]\perp \dots \perp b_{n-1}[1,a_{n-1}]\perp [1,a_1+\dots+a_{n-1}]\mapsto [a_1,b_1)_{2,F} \otimes \dots \otimes [a_{n-1},b_{n-1})_{2,F},$$
where $[a,b)_{2,F}$ is the quaternion algebra generated by $i,j$ satisfying $i^2+i=a$, $j^2=b$ and $jij^{-1}=i+1$.

\section{Quadratic $n$-Fold Pfister Forms}

More generally, a quadratic $n$-fold Pfister form is of the form
$$ \langle \! \langle b_1,\dots,b_{n-1},a]\!]=\perp_{i_1,\dots,i_{n-1} \in \{0,1\}} b_1^{i_1} \dots b_{n-1}^{i_{n-1}} [1,a],$$
where $b_1,\dots,b_{n-1}\in F^\times$ and $a\in F$. Such forms can be either anisotropic or hyperbolic, and their scalar multiples generate $I_q^n F$. The quotient group $I_q^n F/I_q^{n+1} F$ is known to be isomorphic to $H_2^n(F)$ defined via differential forms (see \cite{Kato:1982}).

One way to produce interesting examples of fields $F$ with $u(F)\neq m(F)$ is by considering another field invariant, $\nu(F)$, which is the minimal $n$ for which $H_2^{n+1}(F)=0$. Equivalently, $\nu(F)$ is the minimal $n$ so that $I_q^{n+1} F=0$ (see \cite{AravireBaeza:1989} and \cite{ArasonAravireBaeza:2007}).

\begin{lem}\label{NuLemma}
    When $\operatorname{char}(F)=2$, $m(F)\leq 2^{\nu(F)}$.
\end{lem}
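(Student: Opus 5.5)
The plan is to exhibit an explicit anisotropic universal form of dimension $2^{\nu(F)}$, namely an anisotropic quadratic $n$-fold Pfister form with $n=\nu(F)$. Universality will come from the vanishing of $I_q^{n+1}F$.

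First the degenerate cases. If $\nu(F)=\infty$ there is nothing to prove. If $\nu(F)=0$, then $I_qF=0$, so every nonsingular form is hyperbolic and $u(F)=0$. In that case there is no anisotropic nonsingular form at all, so this case has to be set aside (or read with the convention of the introduction). So assume $1\leq n=\nu(F)<\infty$. By minimality of $n$ we have $I_q^nF\neq 0$ while $I_q^{n+1}F=0$. Since $I_q^nF$ is generated by scalar multiples of quadratic $n$-fold Pfister forms, some $n$-fold Pfister form $\pi=\langle\!\langle b_1,\dots,b_{n-1},a]\!]$ is not hyperbolic. Because Pfister forms are either anisotropic or hyperbolic, $\pi$ is anisotropic. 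It is nonsingular of dimension $2^n$.

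The key step is to show that $\pi$ is universal. Take $c\in F^\times$. In $W_qF$ we have
$$\pi\perp c\pi \;=\; \langle\!\langle c\rangle\!\rangle\otimes\pi \;=\; \langle\!\langle c,b_1,\dots,b_{n-1},a]\!],$$
using that $-c=c$ in characteristic $2$. The right-hand side is an $(n+1)$-fold Pfister form, so it lies in $I_q^{n+1}F=0$. Hence $c\pi$ and $\pi$ are Witt equivalent. Both are anisotropic nonsingular forms of the same dimension (anisotropy of $c\pi$ is inherited from $\pi$). By uniqueness of the anisotropic part, i.e. Witt cancellation for nonsingular forms, we get $c\pi\simeq\pi$. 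Since $\pi$ represents $1$ (the vector $(1,0)$ in the first copy of $[1,a]$), $c\pi$ represents $c$. Therefore $\pi$ represents $c$. As $c$ was arbitrary, and $\pi$ also represents $0$ trivially, $\pi$ is universal, which gives $m(F)\leq 2^n$.

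The only delicate point is the step $c\pi\simeq\pi$. It needs the identification $\pi\perp c\pi=\langle\!\langle c\rangle\!\rangle\otimes\pi\in I_q^{n+1}F$, and then cancellation, which is valid here because we stay within nonsingular forms. Everything else is bookkeeping with the definitions recalled in Section 2.
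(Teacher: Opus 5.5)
Your proof is correct, and it differs from the paper's only in how the universality of $\pi$ is derived. You use the same witness as the paper: an anisotropic $n$-fold Pfister form, which exists because $I_q^nF\neq 0$. You also use the same input, namely that $\langle\!\langle c\rangle\!\rangle\otimes\pi=\pi\perp c\pi$ lies in $I_q^{n+1}F=0$ and is therefore hyperbolic.

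The paper gets representation of $c$ as follows. $\pi\perp\langle c\rangle$ is a subform of $\pi\perp c\pi$, because $c\pi$ represents $c$. Its dimension is $2^n+1>\frac12\cdot 2^{n+1}$, so it is a Pfister neighbor of a hyperbolic form. It is therefore isotropic by a dimension count against a totally isotropic subspace of dimension $2^n$. Anisotropy of $\pi$ then forces $\pi$ to represent $c$.

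You instead work in $W_qF$ and conclude $c\pi\simeq\pi$ by uniqueness of the anisotropic part. This uses that $W_qF$ is $2$-torsion in characteristic $2$: $\pi\perp\pi$ is hyperbolic, so $[c\pi]=-[\pi]=[\pi]$. You use this implicitly, and it is worth stating. Your route needs Witt cancellation for nonsingular forms, whereas the paper's needs only the elementary dimension count. In exchange, you get the stronger conclusion that every $c\in F^\times$ is a similarity factor of $\pi$.

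Your remark on the case $\nu(F)=0$ is also correct. There $W_qF=0$, so $u(F)=0$ and $m(F)=\infty$, and the inequality as literally stated fails. The paper's proof tacitly assumes $n\geq 1$, which matches its restriction to $u(F)\geq 2$ in the introduction.
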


\begin{proof}
    Set $n=\nu(F)$. Then there exists an anisotropic quadratic $n$-fold Pfister form $\varphi$ over $F$. This form is universal, because for any $c\in F^\times$, $\varphi\perp \langle c \rangle$ is isotropic, for it is a Pfister neighbor of the $(n+1)$-fold Pfister form $\langle \! \langle c \rangle \! \rangle \otimes \varphi$, which must be hyperbolic for $\nu(F)<n+1$, whereas $\varphi$ is anisotropic, which means $c$ is represented by $\varphi$.
\end{proof}

\begin{rem}
    A similar $2^n$-type upper bound on values of the $m$-invariant holds in the case of fields of characteristic not 2. In particular, as mentioned in \cite[Proposition 2.3]{Cassady:2026b}, when $\charac{F} \ne 2$ and $n$ is the largest integer so that $2^n\leq u(F)$ one has $m(F)\leq 2^n$. This implies that if $m(F) = u(F)$ then they are both a power of 2. The characteristic 2 $m(F)=u(F)$ examples in Corollary \ref{u_equals_m} follow this pattern, satisfying $m(F)=u(F) = 2^n$.
\end{rem}
\begin{exmpl}
    In \cite{MammoneTignolWadsworth:1991} (and \cite[Theorem 38.4]{EKM}) fields $F$ of characteristic 2 and $I_q^3 F=0$ and $u(F)=2n$ for any $n\in \mathbb{N}$ were constructed. Suppose $n\geq 3$.
    For these fields, the $u$-invariant is also the maximal dimension of an anisotropic quadratic form in $I_q^2 F$ (see \cite[Lemma 4.3]{Chapman:2017}), and so $I_q^2 F\neq 0$, which means $\nu(F)=2$. Therefore, we have $m(F)\leq 2^{\nu(F)}=4 < 6\leq u(F)$.
\end{exmpl}

\section{Possible Values of the $m$-invariant}

To build fields whose $m$-invariant is any even number, we adapt the techniques of \cite[Prop 3.2]{Cassady:2026a} to the characteristic 2 case using characteristic free results in \cite{EKM}. The argument in \cite[Prop 3.2]{Cassady:2026a} is itself based on a similar construction found in \cite[Prop 4.3]{Hoffmann:1994}. The basic tools on the first Witt index of subforms are taken from \cite[Section 74]{EKM}:

\begin{lem}[{\cite[Lemma 74.1 (2) and Proposition 74.2]{EKM}}]\label{EKMlem1}
Let $\varphi$ be an anisotropic nondegenerate quadratic form of dimension at least 2 over a field $F$, and let $\psi$ be a nondegenerate subform of $\varphi$ of codimension $r$.
\begin{enumerate}
    \item If $E/F$ is a field extension, then $i_0(\psi_E)\geq i_0(\varphi_E)-r$.
    \item If $r \geq i_1(\varphi)$, then the form $\psi_{F(\varphi)}$ is anisotropic.
\end{enumerate}
\end{lem}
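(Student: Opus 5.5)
The statement is cited from \cite[Section 74]{EKM}, but I will sketch how each part would be proved using standard characteristic-free facts about isotropy and function fields of quadrics.

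\textbf{Part (1).} The plan is a dimension count on a totally isotropic subspace. Write $V$ for the underlying space of $\varphi_E$ and $W\subseteq V$ for the underlying space of $\psi_E$, so that $\dim W=\dim V-r$. Choose a totally isotropic subspace $U\subseteq V$ of $\varphi_E$ with $\dim U=i_0(\varphi_E)$. Then
$$\dim (U\cap W)\geq \dim U+\dim W-\dim V=i_0(\varphi_E)-r .$$
The subspace $U\cap W$ is totally isotropic for $\psi_E$. The only point needing care is that $i_0$ for a nondegenerate (possibly odd-dimensional) form is, by the paper's definition, the maximal dimension of a totally isotropic subspace. With that definition no Witt decomposition is needed, so the inequality follows immediately.

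\textbf{Part (2).} Set $L=F(\varphi)$. By definition $i_0(\varphi_L)=i_1(\varphi)$, and part (1) gives $i_0(\psi_L)\geq i_1(\varphi)-r$. This lower bound is useless here, since we need an upper bound showing $i_0(\psi_L)=0$. I would argue by contradiction.

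1. Suppose $\psi_L$ is isotropic and $r\geq i_1(\varphi)$.
2. Apply the standard subform principle: if $\psi$ becomes isotropic over $F(\varphi)$, then $i_0(\varphi_{F(\psi)})$ is large enough to force $\varphi$ to acquire $r+1$ dimensions of isotropy over $F(\psi)$. Concretely, I would use the isotropy criterion in the form ``$\psi_{F(\varphi)}$ isotropic $\Rightarrow$ $i_0(\varphi_{F(\psi)\cdots})$ is controlled by $i_1(\varphi)$''.
3. The cleaner route is via the generic splitting tower together with part (1). Over $F(\psi)$, the form $\psi$ is isotropic, hence so is $\varphi_{F(\psi)}$. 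Therefore $F(\psi)(\varphi)/F(\psi)$ is purely transcendental, and $F(\varphi)(\psi)/F(\varphi)$ is purely transcendental because $\psi_{F(\varphi)}$ is isotropic. Consequently the composite $F(\varphi\times\psi)$ is purely transcendental over both $F(\varphi)$ and $F(\psi)$, and Witt indices are preserved under purely transcendental extensions. This gives
$$i_0(\varphi_{F(\psi)})=i_0(\varphi_{F(\varphi)})=i_1(\varphi).$$
4. Now consider a maximal totally isotropic subspace of $\psi_{F(\psi)}$. It has dimension at least $1$ and sits inside the underlying space of $\varphi_{F(\psi)}$. I want to conclude $i_0(\varphi_{F(\psi)})\geq r+1$, which contradicts $r\geq i_1(\varphi)$.

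\textbf{Main obstacle.} The hard step is step 4: showing that an isotropic vector of $\psi$ over $F(\psi)$ produces $r+1$ dimensions of isotropy in $\varphi_{F(\psi)}$. This requires the generic vector argument. The generic point $v$ of $\psi$ is an isotropic vector of $\psi_{F(\psi)}$, and because $\psi$ has codimension $r$, varying over the $r$ extra coordinates of $\varphi$ should produce an $(r+1)$-dimensional totally isotropic space. In EKM this is handled with the specialization/degree argument of Lemma 74.1 and Proposition 74.2 rather than naively, so I expect to follow their approach via the Chow group correspondence, or the trick of passing to $\psi\subset\psi'\subset\varphi$ one dimension at a time. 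The one-step-at-a-time version would run as follows. Using part (1) iteratively on a chain of codimension-one subforms, show that if each successive subform stays anisotropic over $F(\varphi)$ until the Witt index $i_1(\varphi)$ is used up, then the codimension-$r$ subform is anisotropic. The delicate input is nondegeneracy of the intermediate forms in characteristic $2$, which I would secure by choosing the chain to remove one hyperbolic-type coordinate at a time.
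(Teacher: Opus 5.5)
Your part (1) is fine. Intersecting a maximal totally isotropic subspace of $\varphi_E$ with the underlying space of $\psi_E$ is the standard argument, and it needs nothing beyond the definition of $i_0$ as the maximal dimension of a totally isotropic subspace. (The paper gives no proof of either part; it only cites \cite{EKM}.)

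Part (2), however, has a genuine gap.

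\textbf{What works.} Your Step 3 is correct and is a useful reduction. If $\psi_{F(\varphi)}$ is isotropic, then $i_0(\varphi_{F(\psi)})=i_1(\varphi)$, so everything comes down to proving $i_1(\varphi)\geq r+1$.

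\textbf{What is missing.} Step 4 is never proved, and neither heuristic you offer can prove it.
\begin{itemize}
\item A totally isotropic subspace of $\psi_{F(\psi)}$ only gives $i_0(\varphi_{F(\psi)})\geq i_1(\psi)$, i.e.\ $i_1(\varphi)\geq i_1(\psi)$. That is short of what you need by exactly $r$, and nothing in your ``generic vector'' sketch produces isotropic vectors from the $r$ extra coordinates of $\varphi$.
\item Any valid argument must use the hypothesis that $\psi_{F(\varphi)}$ is isotropic in an essential way. Without it the desired inequality is false: for a generic $\varphi$ and a binary subform $\psi$ one has $i_0(\varphi_{F(\psi)})=1$ however large $r$ is.
\item The ``one codimension at a time'' fallback is vacuous. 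Part (1) only gives \emph{lower} bounds on $i_0$ of subforms, so iterating it can never show that $i_0(\psi_{F(\varphi)})$ reaches $0$. What you need is that this index drops by exactly one for each removed dimension, and that is precisely the content of (2).
\end{itemize}

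\textbf{How to close it.} The missing ingredient is a non-elementary theorem about quadrics that are isotropic over each other's function fields. In \cite{EKM}, Proposition 74.2 rests on the Chow-group theory of quadrics (rational correspondences on products of quadrics), not on linear algebra. Alternatively, you can close the gap with \cite[Theorem 76.5]{EKM}, which the paper itself uses in Lemma \ref{remain}.
\begin{itemize}
\item Suppose $\dim\psi\geq 2$. Then $\psi$ is anisotropic, being a subform of the anisotropic form $\varphi$.
\item If $\psi_{F(\varphi)}$ is isotropic, that theorem gives $\dim_{\mathrm{Izh}}X_\varphi\leq\dim_{\mathrm{Izh}}X_\psi$. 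Using $\dim_{\mathrm{Izh}}X=\dim X-i_1+1$, this reads
$$\dim\varphi-1-i_1(\varphi)\leq\dim\psi-1-i_1(\psi).$$
\item Hence $i_1(\varphi)\geq r+i_1(\psi)\geq r+1$, contradicting $r\geq i_1(\varphi)$.
\item If $\dim\psi\leq 1$, the claim is trivial.
\end{itemize}
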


\begin{lem}\label{remain}
    Let $q$ be an anisotropic nonsingular form of dimension at least $2$ over a field $L$ of characteristic 2, and let $a\in L^\times$ be such that $q_M$ is anisotropic for $M=L(q\perp \langle a \rangle)$. Then all anisotropic nondegenerate quadratic  forms $p$ over $L$ with $1\leq \dim p \leq \dim q$ remain anisotropic over $M$.
\end{lem}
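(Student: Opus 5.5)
The plan is to reduce the statement to a comparison of the invariant $\dim - i_1$ for $p$ and for $\varphi:=q\perp\langle a\rangle$. Throughout, $\varphi$ is a nondegenerate form of odd dimension $\dim q+1\geq 3$ over $L$.

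First I dispose of degenerate situations. If $\dim p=1$, then $p$ has no nonzero isotropic vector over any extension, so there is nothing to prove. If $\varphi$ is isotropic over $L$, then $M=L(\varphi)$ is purely transcendental over $L$. Anisotropic forms stay anisotropic under purely transcendental extensions, so again there is nothing to prove. We may therefore assume that $\varphi$ is anisotropic and that $2\leq \dim p\leq \dim q$.

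Next I compute $i_1(\varphi)$. The form $q$ is a nondegenerate subform of $\varphi$ of codimension $r=1$. By \cref{EKMlem1}(1) applied with $E=M$, we get
$$0=i_0(q_M)\geq i_0(\varphi_M)-1=i_1(\varphi)-1,$$
using the hypothesis that $q_M$ is anisotropic. Hence $i_1(\varphi)=1$, and so $\dim\varphi-i_1(\varphi)=\dim q$.

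Now suppose, for a contradiction, that $p$ is anisotropic over $L$ but $p_M$ is isotropic. I would invoke the characteristic-free result from \cite{EKM} (Karpenko--Merkurjev, stated there for nondegenerate forms in arbitrary characteristic). It says that if $\psi$ is anisotropic and $\psi_{L(\varphi)}$ is isotropic, then
$$\dim\psi-i_1(\psi)\geq \dim\varphi-i_1(\varphi).$$
Applied to $\psi=p$ this gives $\dim p-i_1(p)\geq \dim q$. But $i_1(p)\geq 1$, so $\dim p-i_1(p)\leq \dim q-1$, a contradiction. Hence $p_M$ is anisotropic.

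The main obstacle is making sure the cited inequality is available in the generality needed here. That means nondegenerate forms in characteristic 2, with $\varphi$ of odd dimension and $p$ possibly singular-free but of either parity; the relevant statements are in \cite[Sections 74--76]{EKM}. If only a weaker tool is available, I would fall back on a Hoffmann-type argument, as in \cite[Prop 4.3]{Hoffmann:1994}. In that argument one uses $i_1(\varphi)=1$ and \cref{EKMlem1}(2) to see that every codimension-one subform of $\varphi$ stays anisotropic over $M$. One then compares with $p$ via a function-field/specialization argument over $L(p)$.
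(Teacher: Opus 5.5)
Your proof is correct and follows the paper's argument: you derive $i_1(\varphi)=1$ from Lemma~\ref{EKMlem1}(1), then contradict the Karpenko--Merkurjev inequality $\dim p-i_1(p)\geq\dim\varphi-i_1(\varphi)$, which is exactly the Izhboldin-dimension comparison $\dim_{\mathrm{Izh}}X\leq\dim_{\mathrm{Izh}}Y$ of \cite[Theorem 76.5]{EKM} used in the paper. Your explicit treatment of the edge cases $\dim p=1$ and $\varphi$ isotropic over $L$ is a small gain in rigor that the paper leaves implicit, and the fallback Hoffmann-type paragraph is not needed.
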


\begin{proof}
Set $m=\dim q$ and $\varphi=q\perp \langle a \rangle$.
Since $q$ is a codimension one subform of $\varphi$ and $q_{L(\varphi)}$ is anisotropic, by Lemma \ref{EKMlem1}(1)
$$0=i_0(q_{L(\varphi)})\geq i_1(\varphi)-1,$$
and so $i_1(\varphi)=1$.

Suppose $p$ is an anisotropic nondegenerate quadratic form of dimension $d\leq m$ that becomes isotropic over $L(\varphi)$. Let $X$ and $Y$ be the projective quadrics of $\varphi$ and $p$. Recall that $\dim_{\mathrm{Izh}}X=\dim X-i_1(\varphi)+1$. Then \cite[Theorem 76.5]{EKM}  gives
$$\dim_{\mathrm{Izh}}X\leq \dim_{\mathrm{Izh}}Y.$$
But
$$\dim_{\mathrm{Izh}}X
  =(\dim\varphi-2)-i_1(\varphi)+1
  =m-1,$$
whereas
$$
\dim_{\mathrm{Izh}}Y
  =d-i_1(p)-1
  \leq d-2
  \leq m-2,$$
a contradiction.
\end{proof}

\begin{const}\label{cc}
Let $k$ be a field of characteristic 2 over which there exists an anisotropic nonsingular form $\psi$ of $\dim \psi \geq 2$. Define $E_i$ for $i\geq 0$ inductively as follows:
$\tilde{E}_i=E_i(X_i)$ for a transcendental element $X_i$, $E_0=k$ and for $i\geq 1$, $E_i=\tilde{E}_{i-1}(\{ \psi \perp \langle a \rangle : a\in E^\times_{i-1}\})$, the free compositum of the corresponding function fields.
Set $F=\bigcup_{i=0}^\infty E_i$.
\end{const}

\begin{prop}\label{mm}
    Let $k,\psi,E_i,F$ be as in Construction \ref{cc}. If $\psi$ remains anisotropic over $F$, then $m(F)=\dim \psi$.
\end{prop}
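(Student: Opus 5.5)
We have to show two inequalities: $m(F)\leq \dim\psi$, and every anisotropic nonsingular form over $F$ of dimension strictly less than $\dim\psi$ fails to be universal.

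\emph{Upper bound.} By hypothesis $\psi_F$ is anisotropic, so it suffices to show that $\psi_F$ is universal. Take $c\in F^\times$. Then $c\in E_{i-1}^\times$ for some $i\geq 1$, since $F$ is the increasing union of the $E_i$. By Construction \ref{cc}, $E_i$ contains the function field of $\psi\perp\langle c\rangle$ over $\tilde E_{i-1}$, so $\psi\perp\langle c\rangle$ is isotropic over $E_i$ and hence over $F$. Write an isotropic vector as $(v,t)$ with $\psi(v)+ct^2=0$. If $t=0$, then $\psi_F$ would be isotropic, which is excluded. So $t\neq 0$, and $\psi(v/t)=c$ (we are in characteristic 2, so $-c=c$). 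Hence $\psi_F$ is universal and $m(F)\leq\dim\psi$.

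\emph{Lower bound.} Let $p$ be an anisotropic nonsingular form over $F$ with $\dim p<\dim\psi$. Since $p$ has finitely many coefficients, it is defined over some $E_j$. We want an element of $F^\times$ that $p$ does not represent, and the natural candidate is the transcendental $X_j$. Suppose $p$ represented $X_j$ over $F$. Then the form $p\perp\langle X_j\rangle$, which is nondegenerate of dimension $\dim p+1\leq\dim\psi$, would be isotropic over $F$. We derive a contradiction in two steps.

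\emph{Step 1: $p\perp\langle X_j\rangle$ is anisotropic over $\tilde E_j=E_j(X_j)$.} This is the standard first-residue argument. Take a primitive zero with coordinates in $E_j[X_j]$ and compare degrees. Since $p$ is anisotropic over $E_j$, the leading terms force a contradiction: an even top degree from $p$ against an odd top degree from $X_j t^2$, using that $p$ is anisotropic over $E_j$, and anisotropic forms stay anisotropic over purely transcendental extensions.

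\emph{Step 2: anisotropy persists from $\tilde E_j$ up to $F$.} Every later step in the tower is either a purely transcendental extension, which preserves anisotropy, or a free compositum of function fields $L(\psi\perp\langle a\rangle)$. Since $\psi$ stays anisotropic over $F$, it is anisotropic over every intermediate field. So at each single function-field step $L\subset M=L(\psi\perp\langle a\rangle)$, the form $\psi_M$ is anisotropic. We write each compositum as a (possibly transfinite) directed union of single function-field extensions; $\psi$ is anisotropic over each intermediate field as a subfield of $F$. Lemma \ref{remain}, applied with $q=\psi$, then says that every anisotropic nondegenerate form of dimension at most $\dim\psi$ stays anisotropic over $M$. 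In particular this applies to $p\perp\langle X_j\rangle$. Anisotropy is preserved under directed unions, so $p\perp\langle X_j\rangle$ is anisotropic over $F$. This contradicts the supposition, so $p$ does not represent $X_j$ and is not universal. Therefore $m(F)\geq\dim\psi$, and combined with the upper bound, $m(F)=\dim\psi$.

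\emph{Main obstacle.} The delicate part is the bookkeeping in Step 2. The free compositum over infinitely many $a$ must be handled by a well-ordered sequence of single function-field extensions. At each step, the hypothesis of Lemma \ref{remain} ($\psi_M$ anisotropic) must be derived from anisotropy of $\psi$ over $F$, which requires viewing each intermediate field as a subfield of $F$. A subsidiary point is the degree argument in Step 1 in characteristic 2; there I would cite the standard fact that $p\perp\langle t\rangle$ is anisotropic over $K(t)$ when $p$ is anisotropic over $K$.
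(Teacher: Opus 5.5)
Your proof is correct and follows the paper's argument. For $m(F)\leq\dim\psi$ you get universality of $\psi_F$ directly from the construction. For the reverse inequality you show that $p\perp\langle X_j\rangle$ is anisotropic over $\tilde E_j$ and then stays anisotropic up the tower by applying Lemma \ref{remain} with $q=\psi$. Your treatment of the free compositum is, if anything, slightly more careful than the paper's: you write it as a well-ordered chain of single function-field extensions, with $\psi$ anisotropic at each stage because that stage lies inside $F$. The paper instead reduces to one function field $T$, although isotropy over the compositum a priori only gives isotropy over a finite sub-compositum.
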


\begin{proof}
    Consider an arbitrary element $a\in F^\times$. Then $a\in E^\times_i$ for some $i\geq 0$. Hence $\psi \perp \langle a \rangle$ is isotropic over $E_{i+1}$ and therefore over $F$. Therefore, the anisotropic form $\psi$ represents $a$ over $F$. Consequently, $\psi$ is universal, and thus $m(F)\leq \dim \psi$.

    Now, let $q$ be an anisotropic nonsingular quadratic form over $F$ of dimension strictly smaller than $\dim \psi$. This form descends to $E_i$ for some $i\geq 0$. It is easy to see that $\pi=q_{\tilde{E}_i}\perp \langle X_i \rangle$ is anisotropic. In order to argue that $q$ is not universal, it suffices to show that $\pi$ remains anisotropic under restriction to $E_j$ for any $j>i$.

    Since $\pi$ is anisotropic over $\tilde{E}_i$, if $\pi_{E_{i+1}}$ were isotropic, then $\pi_T$ would be isotropic for the function field $T$ of $\psi \perp \langle a_0 \rangle$ for some $a_0\in E_i^\times$. However, $\psi$ is anisotropic over $E_{i+1}$, and so, it is anisotropic over the function field of $\psi \perp \langle a \rangle$ for any $a\in E^\times_i$, and in particular for $a=a_0$. Therefore, since $\dim \pi =\dim q+1\leq\dim \psi$, $\pi$ remains anisotropic over $E_{i+1}$ by Lemma \ref{remain}. This implies that $\pi$ is anisotropic over $\tilde{E}_{i+1}$. Repeating this argument for any finite number of steps shows that $\pi_{E_j}$ is anisotropic for any $j>i$.
\end{proof}

There is yet another tool we need, this time from \cite[Section 30]{EKM}:

\begin{lem}[{\cite[Corollary 30.9]{EKM}}]\label{ELMlem2}
If $D$ is a division algebra of degree smaller than $2^n$ over $F$ and $\varphi$ a nondegenerate anisotropic form of dimension at least $2n+1$, then $D_{F(\varphi)}$ is a division algebra.
\end{lem}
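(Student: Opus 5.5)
The plan is to reduce to the case $\dim\varphi=2n+1$, and then to compare the index of $D$ over $F(\varphi)$ with the index of $D$ and of $D\otimes C_0(\varphi)$ by Merkurjev's index reduction. The size of the even Clifford algebra should then forbid any drop in index. Since the paper quotes the result from \cite[Corollary 30.9]{EKM}, the aim is only to indicate how that corollary follows from the index reduction theorem there (\cite[Theorem 30.8]{EKM}).

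\emph{Reduction to $\dim\varphi=2n+1$.} Choose a nondegenerate subform $\varphi'\subseteq\varphi$ with $\dim\varphi'=2n+1$. It is anisotropic because $\varphi$ is. Over $F(\varphi')$ the form $\varphi'$, hence $\varphi$, becomes isotropic, so $F(\varphi')(\varphi)$ is purely transcendental over $F(\varphi')$. Similarly $F(\varphi)(\varphi')$ is a field extension of $F(\varphi)$, and both composita agree with the function field of $X_\varphi\times X_{\varphi'}$. A division algebra stays division under purely transcendental extensions. So if $D_{F(\varphi')}$ is division, then $D$ is division over the common compositum, and therefore already over the subfield $F(\varphi)$. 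It thus suffices to treat $\varphi$ of dimension exactly $2n+1$.

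\emph{Main step.} For odd-dimensional nondegenerate $\varphi$, the even Clifford algebra $C_0(\varphi)$ is central simple over $F$ of degree $2^{n}$, and it is split by $F(\varphi)$. The index reduction formula \cite[Theorem 30.8]{EKM} should give
$$\operatorname{ind}D_{F(\varphi)}=\min\bigl(\operatorname{ind}D,\ \operatorname{ind}(D\otimes C_0(\varphi))\bigr),$$
or at least that $\operatorname{ind}D_{F(\varphi)}<\operatorname{ind}D$ forces $\operatorname{ind}(D\otimes C_0(\varphi))<\operatorname{ind}D$. In the latter case one derives a contradiction from $\deg D<2^n$. The idea is that a drop in index would force $C_0(\varphi)$ to be Brauer-equivalent to something of index at most $2\deg D$, and in fact below $2^n$. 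On the other hand, $C_0(\varphi)$ must be a division algebra, since otherwise $\varphi$ would be isotropic. This is because a splitting of $C_0(\varphi)$ in the appropriate sense realizes $\varphi$ as a neighbor of an isotropic Pfister-type form; equivalently, Karpenko's estimate says the quadric of an anisotropic $(2n+1)$-dimensional form cannot map rationally to a Severi–Brauer variety of dimension less than $2^n-1$. Either formulation yields the contradiction.

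\emph{Expected obstacle.} The main difficulty is the index reduction itself, i.e.\ computing $\operatorname{ind}D_{F(\varphi)}$ in characteristic 2 uniformly for nondegenerate (possibly odd-dimensional) forms. This requires the Chow-group or $K_0$ computation for $X_\varphi\times \mathrm{SB}(D)$ carried out in \cite[Section 30]{EKM}. I would simply invoke that computation rather than redo it. The remaining argument, the degree comparison between $D$ and $C_0(\varphi)$ together with anisotropy of $\varphi$, is then bookkeeping.
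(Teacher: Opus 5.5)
The paper gives no argument for this statement; it is quoted from \cite[Corollary 30.9]{EKM}. Your reduction to $\dim\varphi=2n+1$ is correct, but the main step has a genuine gap.

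The formula $\operatorname{ind}D_{F(\varphi)}=\min\bigl(\operatorname{ind}D,\operatorname{ind}(D\otimes C_0(\varphi))\bigr)$ is only correct for conics ($n=1$). For $\dim\varphi=2n+1$, Merkurjev's index reduction formula reads
\[
\operatorname{ind}D_{F(\varphi)}=\min\bigl(\operatorname{ind}D,\ 2^{n-1}\operatorname{ind}(D\otimes C_0(\varphi))\bigr),
\]
where the factor $2^{n-1}$ is the rank of the spinor bundle on the quadric.

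Your version contradicts the very statement you are proving. Take $F=\mathbb{R}$, $\varphi=\langle 1,1,1,1,1\rangle$ and $D=(-1,-1)_{\mathbb{R}}$. Then $C_0(\varphi)\cong M_2(D)$, so your formula predicts that $D$ splits over $\mathbb{R}(\varphi)$. But the norm form $\langle 1,1,1,1\rangle$ of $D$ stays anisotropic over $\mathbb{R}(\varphi)$ by Hoffmann's Separation Theorem, since $4\leq 2^2<5$.

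The same example shows that $C_0(\varphi)$ need not be split by $F(\varphi)$ once $n\geq 2$. It also shows that $C_0(\varphi)$ need not be a division algebra when $\varphi$ is anisotropic. The latter fails even more drastically for a $7$-dimensional nondegenerate subform of an anisotropic quadratic $3$-fold Pfister form, whose even Clifford algebra is split.

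So the contradiction you aim for (that $C_0(\varphi)$ is a division algebra of index $2^n$) is not available. The bound ``index at most $2\deg D$'' has no justification. The remark about rational maps $X_\varphi\dashrightarrow\mathrm{SB}(D)$ concerns only splitting of $D_{F(\varphi)}$, not loss of the division property, and in any case restates the claim rather than proving it.

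With the correct factor, nothing about $C_0(\varphi)$ is needed. Write $\deg D=2^am$ with $m$ odd. Then $\deg D<2^n$ forces $a\leq n-1$. Since $C_0(\varphi)$ has $2$-power degree, $\operatorname{ind}(D\otimes C_0(\varphi))$ has odd part $m$. Hence $\operatorname{ind}D\leq 2^{n-1}m\leq 2^{n-1}\operatorname{ind}(D\otimes C_0(\varphi))$, and the minimum equals $\operatorname{ind}D=\deg D$.

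Put differently, a drop of index would force $C_0(\varphi)$ to embed in $D$. That is impossible, since $\dim C_0(\varphi)=2^{2n}>(\deg D)^2$, and this dimension count is exactly what the hypothesis $\deg D<2^n$ is designed for.

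You would still need a precise reference for this index reduction (or the embedding statement) for nondegenerate forms in characteristic $2$. That, not the bookkeeping, is the real content of \cite[Section 30]{EKM}, and it should be cited exactly rather than guessed.
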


\begin{thm}\label{evens}
    For each positive $n\geq 2$ there exists a field $F$ of $\operatorname{char}(F)=2$ and $m(F)=2n$.
\end{thm}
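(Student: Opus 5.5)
We apply Proposition~\ref{mm}. The plan is to choose a base field $k$ and an anisotropic nonsingular form $\psi$ of dimension $2n$ over $k$, and then show that $\psi$ stays anisotropic over the field $F$ of Construction~\ref{cc}. Following \cite[Prop 3.2]{Cassady:2026a} and \cite[Prop 4.3]{Hoffmann:1994}, we take $\psi$ to be the reduced norm form of a biquaternion-type algebra, or more generally a form whose anisotropy is detected by a division algebra. Concretely, let $k_0$ be a field of characteristic 2 and set $k=k_0(t_1,\dots,t_{n-1},s_1,\dots,s_{n-1},y)$ with independent indeterminates. Over $k$, consider
$$\psi=[1,t_1]\perp s_1[1,t_1]\perp\dots ,$$
or better a form in $I_q^2 k$ of dimension $2n$ built as in the Clifford invariant formula:
$$\psi=s_1[1,t_1]\perp\dots\perp s_{n-1}[1,t_{n-1}]\perp[1,t_1+\dots+t_{n-1}].$$
Its Clifford algebra is $D=[t_1,s_1)\otimes\dots\otimes[t_{n-1},s_{n-1})$, a division algebra of degree $2^{n-1}<2^n$ for generic $t_i,s_i$. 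The key property we need is that $\psi$ is anisotropic over any field extension $K/k$ over which $D_K$ is still division. Indeed, if $\psi_K$ were isotropic, then $\psi_K\simeq \mathbb{H}\perp\psi'$ with $\dim\psi'=2n-2$ and $\psi'\in I_q^2K$. The Clifford algebra of $\psi'$ is Brauer equivalent to $D_K$, but a form of dimension $2n-2$ in $I_q^2$ has Clifford algebra of index at most $2^{n-2}$, which contradicts $D_K$ being division of degree $2^{n-1}$. (For $n=2$ we may instead take $\psi$ to be an anisotropic 2-fold Pfister form and argue directly via its quaternion algebra.)

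It therefore suffices to show that $D$ remains a division algebra over $F$. Since $F$ is a directed union, this reduces to each $E_i$, and then, by writing the free compositum as a direct limit of finite towers of function fields, to a single step. Purely transcendental extensions $\tilde E_{i-1}=E_{i-1}(X_{i-1})$ preserve division algebras. Each function field step is $K(\psi\perp\langle a\rangle)$, where $\psi\perp\langle a\rangle$ is nondegenerate of dimension $2n+1\ge 2n+1$. If it is isotropic over $K$, the function field is purely transcendental over $K$ and $D$ stays division. Otherwise it is anisotropic, and Lemma~\ref{ELMlem2}, applied with $\deg D=2^{n-1}<2^n$, shows that $D_{K(\psi\perp\langle a\rangle)}$ is division. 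Induction along the tower then gives that $D_F$ is division, so $\psi_F$ is anisotropic and Proposition~\ref{mm} yields $m(F)=2n$.

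The main obstacle is the claim that a $(2n-2)$-dimensional form in $I_q^2$ cannot have Clifford algebra of index $2^{n-1}$. The plan is to write such a form as $\sum_{j=1}^{n-2} b_j[1,a_j]\perp[1,\sum a_j]$; the Clifford formula recalled in the background then exhibits its Clifford algebra as a tensor product of $n-2$ quaternion algebras, so its index is at most $2^{n-2}$. We must also check that $\psi'$ lies in $I_q^2K$, which holds because the Arf invariant is unchanged by removing $\mathbb{H}$. A second point to verify is that $D$ is division over $k$ for generic parameters; this is the standard generic tensor product of quaternion algebras, and it can be proved by iterated Laurent series valuations in the $s_i$.
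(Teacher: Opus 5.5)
Your proof is correct and follows essentially the same route as the paper. You take $\psi=\sum b_i[1,a_i]\perp[1,\sum a_i]$, whose Clifford invariant is a division tensor product $D$ of $n-1$ quaternion algebras. You then keep $D$ division along Construction~\ref{cc} using rational extensions and Lemma~\ref{ELMlem2} (with $\deg D=2^{n-1}<2^n$ and $\dim(\psi\perp\langle a\rangle)=2n+1$), and conclude with Proposition~\ref{mm}. Along the way you supply details the paper leaves implicit: the index argument showing that $D_K$ division forces $\psi_K$ anisotropic, the purely transcendental case when $\psi\perp\langle a\rangle$ is already isotropic, and an explicit generic base field in place of the citation to \cite{Chapman:2020}; the stray variable $y$ and the abandoned first display of $\psi$ are harmless.
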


\begin{proof}
    Let $k$ be a field of characteristic 2 admitting a division algebra $D$ that decomposes as a tensor product of $n-1$ quaternion algebras 
    $$D=[a_1,b_1)_{2,k} \otimes \dots \otimes [a_{n-1},b_{n-1})_{2,k}.$$
    See \cite{Chapman:2020} for examples of such fields and algebras. Then $D$ has index $2^{n-1}$.
    Taking $\psi=b_1[1,a_1]\perp \dots \perp b_{n-1}[1,a_{n-1}]\perp [1,a_1+\dots+a_{n-1}]$, we have $C\ell(\psi)\cong M_2(D)\sim_{Br} D$. In particular, $\psi$ is an anisotropic form of dimension $2n$.
    Now follow Construction \ref{cc} starting with these $k$ and $\psi$ to create $E_i$ for $i\geq 0$ and $F$.

    Given a division algebra $B$ over a field $K$, the restriction $B_{K(x)}$ to the field of rational functions in one variable $x$ over $K$ is also a division algebra.
    Moreover, if the index of $B$ is $2^{n-1}$, then $B_{K(\varphi)}$ is a division algebra for any nondegenerate anisotropic form $\varphi$ of dimension $\geq 2n+1$ by Lemma \ref{ELMlem2}.
    Since these are the two kinds of steps that are taken in moving from $E_i$ to $\tilde E_i$ and from $\tilde E_i$ to $E_{i+1}$, $D_F$ is a division algebra, and so $\psi_F$ is anisotropic. It follows from Proposition \ref{mm} that $m(F)=2n$.
\end{proof}

\begin{rem}
    It is not difficult to produce an example of a field $F$ of $m(F)=2$. Take for example $F=\mathbb{F}_2$. This covers the case of $n=1$ not addressed in the previous theorem.
\end{rem}

\section{Equality between $m(F)$ and $u(F)$}

Let $(F,v)$ be a field of arbitrary characteristic with a maximally
complete valuation $v\colon F\to\Gamma\cup\{\infty\}$. Write $k$ for
the residue field of $F$, which may not have the same characteristic
as $F$, and $\Gamma_F$ for $v(F^\times)$. Recall that for any base field $k$ the iterated Laurent Series field $F=k(\!(x_1)\!)(\!(x_2)\!)\cdots(\!(x_n)\!)$ with value group $\mathbb Z^n$ is maximally complete (e.g., \cite[Example 3.11]{TignolWadsworth:2015}). 

Let also $(V,q)$ be a finite-dimensional $K$-vector space with a
quadratic form $q\colon V\to F$, which may be degenerate, and write
$b$ for the polar form of $q$. Let $\Gamma_q\subseteq \Gamma_F$ be the subset $\Gamma_q =\{v(q(x))\mid x\in V\setminus\{0\}\}$. Note that $\Gamma_q$ may not be a group, but because $v(q(\lambda x))) = 2v(\lambda)+v(q(x))$, it is a set of cosets of the subgroup $2\Gamma_F$. Denote the number of cosets by $\left\vert \Gamma_q/2\Gamma_F\right \vert$. In Proposition \ref{Tignol} we show that the number of cosets is equal to the dimension of $V$, hence giving a possible obstruction to a form being universal. This is used in Corollary \ref{u_equals_m}.

\begin{prop}\label{Tignol}
  If $q$ is anisotropic and $k$ is algebraically closed, then
  \[
    \left\vert \Gamma_q/2\Gamma_F\right\vert = \dim_FV.
  \]
\end{prop}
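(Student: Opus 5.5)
We prove the two inequalities separately. The key device is a splitting of $V$ adapted to the valuation: for $x\in V\setminus\{0\}$ set $w(x)=\tfrac12 v(q(x))\in \tfrac12\Gamma_F$. Anisotropy of $q$ guarantees $w(x)<\infty$ for every $x\neq 0$, and $w(\lambda x)=v(\lambda)+w(x)$.

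\textbf{Step 1: $w$ is a value function.} We first show that $w(x+y)\geq\min(w(x),w(y))$, so that $w$ is a norm on $V$ in the sense of Tignol--Wadsworth. Since $q(x+y)=q(x)+q(y)+b(x,y)$, this reduces to a Cauchy--Schwarz-type bound
\[
v(b(x,y))\geq w(x)+w(y).
\]
To prove this bound, restrict $q$ to the plane $\langle x,y\rangle$ and suppose the bound fails. After rescaling $x$ and $y$, the restricted binary form is $q(sx+ty)=\alpha s^2+\beta st+\gamma t^2$ with $v(\beta)<\tfrac12(v(\alpha)+v(\gamma))$. Then the Newton polygon of $q(s x+y)$ in $s$ has two slopes. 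Since $F$ is maximally complete, and hence henselian, this polynomial has a root in $F$. That root makes $q$ isotropic, a contradiction.

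\textbf{Step 2: $\dim_F V\leq |\Gamma_q/2\Gamma_F|$.} Because $F$ is maximally complete, every finite-dimensional normed space admits a splitting base, i.e.\ a base $e_1,\dots,e_m$ with $w(\sum\lambda_ie_i)=\min_i(v(\lambda_i)+w(e_i))$. We then compare the graded space $\gr_w(V)$ with $\gr(F)$. For each coset $\gamma+\Gamma_F$ in $\tfrac12\Gamma_F/\Gamma_F$, the homogeneous component of $\gr_w(V)$ is a vector space over the residue field $k$. The form $q$ induces a quadratic form $\tilde q$ on each component, taking values in the graded piece of $\gr(F)$ of degree $2\gamma$, which is one-dimensional over $k$. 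By Step 1 and the definition of $w$, $\tilde q$ is anisotropic on each component. Since $k$ is algebraically closed, an anisotropic quadratic form over $k$ has dimension at most $1$. This holds in characteristic $2$ as well: $[1,a]$ is isotropic and $\langle a,b\rangle$ is isotropic because $k$ is perfect. Hence each component has $k$-dimension at most $1$. Consequently the values $w(e_i)$ lie in pairwise distinct cosets of $\Gamma_F$, so the values $v(q(e_i))=2w(e_i)$ lie in pairwise distinct cosets of $2\Gamma_F$. This gives $\dim V\leq|\Gamma_q/2\Gamma_F|$.

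\textbf{Step 3: $|\Gamma_q/2\Gamma_F|\leq\dim_F V$.} With the splitting base from Step 2, write $x=\sum\lambda_ie_i$. The minimum $\min_i(v(\lambda_i)+w(e_i))$ is attained at a unique index, because the $w(e_i)$ are distinct modulo $\Gamma_F$. Therefore
\[
v(q(x))=2w(x)\in 2w(e_i)+2\Gamma_F
\]
for that index $i$. So $\Gamma_q$ is contained in the union of the $m$ cosets $v(q(e_i))+2\Gamma_F$, which gives the reverse inequality and hence equality.

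\textbf{Main obstacle.} The hardest part is Step 1 together with the existence of a splitting base. The inequality $v(b(x,y))\geq w(x)+w(y)$ must be obtained from anisotropy via Hensel's lemma; in characteristic $2$ the cross term cannot be removed by diagonalizing, so a direct root-finding argument on the binary form is needed. The existence of a splitting base relies on maximal completeness; we would cite Tignol--Wadsworth for it, since $\tfrac12\Gamma_F$-valued norms on finite-dimensional spaces over maximally complete fields split.
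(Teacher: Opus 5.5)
Your proposal is correct and takes essentially the same route as the paper: the value function $\tfrac12 v\circ q$, anisotropy of the induced graded form forcing each homogeneous component to be one-dimensional over the algebraically closed residue field, and maximal completeness making the value function a norm so that $\dim_F V$ equals the graded dimension. The differences are minor. You prove the bound $v(b(x,y))\geq w(x)+w(y)$ directly via Hensel's lemma on the binary form, where the paper cites Elomary--Tignol. You also count cosets through an explicit splitting base rather than quoting the formula $\dim_{\gr(F)}\gr_\alpha(V)=\sum_i\dim_k V_{\gamma_i}$.
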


\begin{proof}
  Define a map $\alpha\colon V\to \frac{1}{2}\Gamma_q\cup\{\infty\}$ by
  \[
    \alpha(x)= \textstyle{\frac12}v(q(x)) \qquad\text{for $x\in V$.}
  \]
  Since $q$ is anisotropic and $v$ is Henselian, $\alpha$ is a $v$-value
  function on $V$ such that
  \[
    \alpha(x)+\alpha(y)\leq v(b(x,y)) \quad\text{and}\quad
    2\alpha(x)\leq v(q(x)) \quad\text{for all $x$, $y\in V$,}
  \]
  see \cite[Prop.~12]{ElomaryTignol:2011}. We may therefore define an induced graded
  quadratic form $\widehat q\colon \gr_\alpha(V) \to \gr(F)$ with polar
  form $\widehat b\colon \gr_\alpha(V)\times\gr_\alpha(V)\to \gr(F)$ such
  that
  \begin{equation}
    \label{eq:1}
    \widehat q(\widetilde x) = q(x)+F_{>2\alpha(x)} \quad\text{and}\quad
    \widehat b(\widetilde x,\widetilde y) = b(x,y)+F_{>\alpha(x)+\alpha(y)}
    \quad\text{for all $x$, $y\in V\setminus\{0\}$},
  \end{equation}
  see \cite[\S3]{ElomaryTignol:2011}. With $\Gamma_V:=\alpha(V\setminus\{0\})$ we
  have
  \[
    \Gamma_V=\{\textstyle{\frac12}v(q(x))\mid x\in V\setminus\{0\}\},
  \]
  and we claim that $\left\vert\Gamma_V/\Gamma_F\right\vert=\dim_KV$. Note that $\left\vert\Gamma_V/\Gamma_F\right\vert = \left\vert \Gamma_q/2\Gamma_F\right\vert$.

  Picking a representative $\gamma_i\in\Gamma_V$ in each coset
  $i\in\Gamma_V/\Gamma_F$ and letting $V_{\gamma_i}$ denote the
  homogeneous component of degree $\gamma_i$ in $\gr_\alpha(V)$, we
  obtain from~\cite[Prop.~2.5]{TignolWadsworth:2015}
  \begin{equation}
    \label{eq:2}
    \dim_{\gr(F)}\gr_\alpha(V) = \sum_{i\in \Gamma_V/\Gamma_F}
    \dim_kV_{\gamma_i}.
  \end{equation}
  Now, as defined in~\eqref{eq:1}, $\widehat q$ yields a quadratic
  ``form'' $q_{\gamma_i}\colon V_{\gamma_i}\to F_{2\gamma_i}$, where
  the homogeneous component $F_{2\gamma_i}$ of degree~$2\gamma_i$ in
  $\gr(F)$ is a $1$-dimensional $k$-vector space. From the definition
  of $\alpha$, it is clear that $q_{\gamma_i}$ is anisotropic, hence
  $\dim_kV_{\gamma_i}=1$ since $k$ is algebraically closed. Therefore,
  \eqref{eq:2} yields
  \[
    \dim_{\gr(F)}\gr_\alpha(V)=\left\vert\Gamma_V/\Gamma_F\right\vert.
  \]
  Since $F$ is maximally complete, the value function $\alpha$ is a
  $v$-norm, hence 
  $\dim_FV=\dim_{\gr(F)}\gr(V)$ and the proposition follows.
\end{proof}

\begin{cor} \label{u_equals_m}
Let $k$ be algebraically closed of characteristic $2$ and
$F=k(\!(x_1)\!)\cdots(\!(x_n)\!)$.
Then $u(F)=m(F)=2^n$.
\end{cor}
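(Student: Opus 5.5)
We prove $u(F)\leq 2^n$, $m(F)\geq 2^n$, and $m(F)\leq u(F)$; together these force $u(F)=m(F)=2^n$.

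\emph{Upper bound on $u(F)$.} Let $q$ be any anisotropic nonsingular (hence nondegenerate) form over $F$. Apply Proposition \ref{Tignol}: $F$ is maximally complete with residue field $k$ algebraically closed, so $\dim q=\left\vert \Gamma_q/2\Gamma_F\right\vert$. Since $\Gamma_F=\mathbb{Z}^n$, the set $\Gamma_q/2\Gamma_F$ is a subset of $\Gamma_F/2\Gamma_F\cong(\mathbb{Z}/2)^n$, which has $2^n$ elements. Hence $\dim q\leq 2^n$, i.e.\ $u(F)\leq 2^n$. (The same bound holds for anisotropic degenerate forms, though we will not need this.)

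\emph{Lower bound on $m(F)$.} Let $q$ be an anisotropic nonsingular universal form over $F$. Universality gives $\{v(c):c\in F^\times\}\subseteq \Gamma_q$, so $\Gamma_q=\Gamma_F$, and therefore $\Gamma_q$ meets all $2^n$ cosets of $2\Gamma_F$. By Proposition \ref{Tignol}, $\dim q=2^n$. Thus every anisotropic universal form has dimension exactly $2^n$, which gives $m(F)\geq 2^n$.

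\emph{Existence.} It remains to exhibit an anisotropic nonsingular form of dimension $2^n$, which gives $u(F)\geq 2^n$, and to show $m(F)\leq u(F)$. For the first, I would take the $n$-fold quadratic Pfister form $\langle\!\langle x_1,\dots,x_{n-1},a]\!]$, where $a$ is chosen so that $[1,a]$ is anisotropic over $F$. Since $k$ is algebraically closed, $\wp(k)=k$, so such an $a$ must have negative value; I would try $a=x_n^{-1}$. Indeed, $[1,x_n^{-1}]$ is anisotropic over $k(\!(x_1)\!)\cdots(\!(x_n)\!)$ because $t^2+t=x_n^{-1}$ has no solution: the value $-1$ is odd. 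The obstacle is anisotropy of the full Pfister form. The cleanest route is to check that it has $2^n$ values in distinct cosets: the slots $x_i^{\epsilon}[1,x_n^{-1}]$ take values $v(x^\epsilon)-e_n+2\Gamma$ on generic vectors, these cosets are pairwise distinct, and a standard valuation argument then shows that no cancellation of leading terms can occur. Alternatively, one can argue by induction, using that $\varphi\perp x_i\varphi$ is anisotropic over a Laurent series field in $x_i$ whenever $\varphi$ is anisotropic over the base field. This is cleaner: for $i<n$, put $K=k(\!(x_1)\!)\cdots(\!(x_{i-1})\!)(\!(x_{i+1})\!)\cdots(\!(x_n)\!)$ (a reordering, which is fine for an algebraically closed $k$, up to completeness subtleties), and apply the standard Springer-type argument. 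The anisotropic Pfister form is then universal by the argument of Lemma \ref{NuLemma}, which also yields $m(F)\leq 2^n$ directly. Alternatively, once $u(F)=2^n$ is known, $\nu(F)\leq n$ follows, since $I_q^{n+1}F$ forms have dimension $\geq 2^{n+1}$ when anisotropic, and then Lemma \ref{NuLemma} applies.

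Combining these, $2^n\leq m(F)\leq u(F)\leq 2^n$. The main obstacle is the anisotropy of the $2^n$-dimensional Pfister form, whose slots involve the ordering of the Laurent variables; I would handle it by a residue/leading-term argument rather than by induction.
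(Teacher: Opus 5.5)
Your proposal is correct, but you get the upper bounds from a different source than the paper. The paper derives both $u(F)\leq 2^n$ and the universality of its Pfister form $\varphi$ from the $C_n$-property of $k(\!(x_1)\!)\cdots(\!(x_n)\!)$: the form $\varphi\perp\langle c\rangle$ has $2^n+1$ variables, so it is isotropic. The paper uses Proposition \ref{Tignol} only for $m(F)\geq 2^n$, exactly as in your second step.

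You instead read off $u(F)\leq 2^n$ from Proposition \ref{Tignol} itself, via $\Gamma_q/2\Gamma_F\subseteq\Gamma_F/2\Gamma_F$. You then get universality from the Pfister neighbor argument of Lemma \ref{NuLemma}: an $(n+1)$-fold Pfister form has dimension $2^{n+1}>u(F)$, so it is hyperbolic. The remark you set aside does this even more directly. Proposition \ref{Tignol} allows degenerate forms, so the $(2^n+1)$-dimensional form $\varphi\perp\langle c\rangle$ cannot be anisotropic. Your route makes the whole corollary a consequence of the single valuation-theoretic count and avoids Lang's $C_n$ theorem. The paper's route is shorter, given that classical input.

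You also argue the anisotropy of the Pfister form, which the paper only asserts. Two details there need fixing.
\begin{enumerate}
\item Each slot $x^\epsilon[1,x_n^{-1}]$ has value set $v(x^\epsilon)+\{0,-e_n\}+2\Gamma_F$, not just $v(x^\epsilon)-e_n+2\Gamma_F$, because $v(s^2+st+x_n^{-1}t^2)=\min\bigl(2v(s),2v(t)-e_n\bigr)$. The leading-term argument still works, since these value sets lie in distinct cosets of $2\Gamma_F+\mathbb{Z}e_n$ for distinct $\epsilon$. So for $w\neq 0$, $v(\varphi(w))$ is the minimum of the values of the nonzero slot contributions and is finite.
\item Reordering the Laurent variables is not harmless. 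The form $[1,x_1^{-1}x_2]$ is split over $k(\!(x_1)\!)(\!(x_2)\!)$, by Hensel's lemma for the $x_2$-adic valuation. It is anisotropic over $k(\!(x_2)\!)(\!(x_1)\!)$, where $x_1^{-1}x_2$ has odd negative $x_1$-adic value. If you want the Springer-type induction, use $\langle\!\langle x_2,\dots,x_n,x_1^{-1}]\!]$ instead. Then you start from $[1,x_1^{-1}]$ over $k(\!(x_1)\!)$ and adjoin $x_2,\dots,x_n$ in their given order.
\end{enumerate}
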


\begin{proof}
    Recall that $F$ is a $C_n$-field (\cite[Chapter 6]{EnglerPrestel:2005}), which means that every homogeneous polynomial form of degree $d$ in more than $d^n$ variables is isotropic. In particular, for $d=2$ it means that $u(F)\leq 2^n$. On the other hand, $\varphi=\langle \! \langle x_1^{-1},\dots,x_n^{-1} \rangle \! \rangle$ is an anisotropic quadratic $n$-fold Pfister form over $F$, which means $u(F)=2^n$.
    The form $\varphi$ is also universal, because of the $C_n$-property.

If $\psi$ is a quadratic form of dimension $< 2^n$ over $F$ then by Proposition \ref{Tignol}, $\left\vert \Gamma_\psi/2\Gamma_F\right\vert <2^n$. Since $\left \vert \Gamma_F/2\Gamma_F\right\vert = 2^n$, $\Gamma_\psi \subsetneq \Gamma_F$, showing that $\psi$ is not universal. Therefore, $m(F)=2^n$.
    
\end{proof}

\section*{Acknowledgments}

We are indebted to Jean-Pierre Tignol for providing the proof of Proposition \ref{Tignol}.

\bibliographystyle{abbrv}
\bibliography{bibfile}

\end{document}